\newtheorem{theorem}{Theorem}[section]
\newtheorem{lemma}[theorem]{Lemma}
\newtheorem{conjecture}[theorem]{Conjecture}
\newtheorem{rem}[theorem]{Remark}
\newcommand{\RR}{\mathbb{R}}
\newcommand{\da}{\partial_{\alpha}}
\newcommand{\al}{\alpha}
\newcommand{\eps}{\epsilon}
\newenvironment{proof}{\begin{trivlist} \item[] {\em Proof:}}{\hfill $\Box$
                       \end{trivlist}}
\renewcommand*\l@section{\@dottedtocline{1}{0em}{1.5em}}
\renewcommand*\l@subsection{\@dottedtocline{2}{1.5em}{2.3em}}
\renewcommand*\l@subsubsection{\@dottedtocline{3}{3.8em}{3.7em}}
\numberwithin{equation}{section}
\begin{document}

\title{A note on stability shifting for the Muskat problem}

\author{Diego C\'ordoba, Javier G\'omez-Serrano and Andrej Zlato\v{s}}

\maketitle

\begin{abstract}

In this note, we show that there exist solutions of the Muskat problem that shift stability regimes: they start unstable, then become stable, and finally return to the unstable regime. We also exhibit numerical evidence of solutions with medium-sized $L^{\infty}$ norm of the derivative of the initial condition that develop a turning singularity.

\vskip 0.3cm
\textit{Keywords: Muskat problem, interface, incompressible fluid, porous media, Rayleigh-Taylor}

\end{abstract}


\section{Introduction}

In this paper, we study two incompressible fluids with the same viscosity but different densities, $\rho^{+}$ and $\rho^{-}$, evolving in a two dimensional porous medium with constant permeability $\kappa$. The velocity $v$ is determined by Darcy's law
\begin{equation}\label{IIIdarcy}
\mu\frac{v}{\kappa}=-\nabla p-g\left(\begin{array}{cc}0\\ \rho\end{array}\right),
\end{equation}
where $p$ is the pressure, $\mu>0$ viscosity, and $g > 0$ gravitational acceleration. In addition, $v$ is incompressible:
\begin{equation}\label{IIIincom}
\nabla\cdot v=0.
\end{equation}
By rescaling properly, we can assume $\mu=g=1$. The fluids also satisfy the conservation of mass equation
\begin{equation}\label{IIIconser}
\partial_t\rho+v\cdot\nabla\rho=0.
\end{equation}

This is known as the Muskat problem \cite{Muskat:porous-media}. We denote by $\Omega^{+}$ the region occupied by the fluid with density $\rho^{+}$ and by $\Omega^{-}$ the region occupied by the fluid with density $\rho^{-} \neq \rho^{+}$. The point $(0,\infty)$ belongs to $\Omega^{+}$, whereas the point $(0,-\infty)$ belongs to $\Omega^{-}$. All quantities with superindex $\pm$ will refer to $\Omega^{\pm}$ respectively. The interface between both fluids at any time $t$ is a curve $z(\alpha,t)$. 
We will work in the setting of flat at infinity interfaces, although the results can be extended to the horizontally periodic case.

A quantity that will play a major role in this paper is the Rayleigh-Taylor condition, which is defined as
\begin{align*}
RT(\alpha,t)=-\left[ \nabla p^{-}(z(\alpha,t))-\nabla p^{+}(z(\alpha,t)) \right]\cdot\partial_\alpha^\bot z(\alpha,t),
\end{align*}
where we use the convention $(u,v)^{\perp} = (-v,u)$. If $RT(\alpha,t)>0$ for all $\alpha\in\mathbb{R}$, we will say that the curve is in the Rayleigh-Taylor stable regime at time $t$, and if $RT(\alpha,t) \leq 0$ for some $\alpha\in\mathbb{R}$, we will say that the curve is in the Rayleigh-Taylor unstable regime. 

One can rewrite the system \eqref{IIIdarcy}--\eqref{IIIconser} in terms of the curve $z(\al,t)$, obtaining
\begin{align}\label{muskatinterface}
\partial_{t} z(\al,t) = \frac{\rho^{-} - \rho^{+}}{2\pi} \int_\mathbb{R} \frac{z_1(\al,t) - z_1(\beta,t)}{|z(\al,t) - z(\beta,t)|^{2}}(\partial_{\al}z(\al,t) - \partial_{\beta}z(\beta,t)) d\beta.
\end{align}
A simple calculation of the Rayleigh-Taylor condition in terms of $z(\al,t)$ yields
\begin{align*}
 RT(\al,t) = g(\rho^{-} - \rho^{+})\partial_{\al} z_{1}(\al,t).
\end{align*}
When the interface is a graph, parametrized as $z(\al,t)=(\alpha,f(\alpha,t))$, equation \eqref{muskatinterface} becomes
\begin{align}\label{muskatinterfacegraph}
 \partial_t f(x,t) = \frac{\rho^{-} - \rho^{+}}{2\pi} \int_\mathbb{R} \frac{x-y}{(x-y)^2 + (f(x,t)-f(y,t))^2}(\partial_{x}f(x,t) - \partial_{y}f(y,t)) d y
\end{align}
and the Rayleigh-Taylor condition simplifies to
\begin{align*}
RT(\al,t) =g(\rho^{-}-\rho^{+}).
\end{align*}
The curve is now in the RT stable regime whenever $\rho^{+}<\rho^{-}$, i.e., the denser fluid is at the bottom.

The Muskat problem  has been studied in many works. A proof of local existence of classical solutions in the Rayleigh-Taylor stable regime and ill-posedness in the unstable regime appears in \cite{Cordoba-Gancedo:contour-dynamics-3d-porous-medium}. 
A maximum principle for $\|f(\cdot,t)\|_{L^\infty}$ can be found in \cite{Cordoba-Gancedo:maximum-principle-muskat}. Moreover, the authors showed in \cite{Cordoba-Gancedo:maximum-principle-muskat} that if $\|\partial_{x} f_0\|_{L^\infty}<1$, then $\|\partial_{x} f(\cdot,t)\|_{L^\infty}<\|\partial_{x} f_0\|_{L^\infty}$ for all $t>0$. Further work has shown existence of finite time turning (i.e., the curve ceases to be a graph in finite time and the Rayleigh-Taylor condition changes sign) \cite{Castro-Cordoba-Fefferman-Gancedo-LopezFernandez:rayleigh-taylor-breakdown}. The precise result is the following:

\begin{theorem}\label{oneone}
There exists a nonempty open set of initial data in $H^4$ with
Rayleigh-Taylor strictly positive (i.e., $RT(\al,0)>0$ for all $\al\in\mathbb{R}$) such that the solutions of \eqref{muskatinterface} have $RT(\alpha,t)<0$ for some finite time $t>0$ and all $\alpha$ in some nonempty open interval.
\end{theorem}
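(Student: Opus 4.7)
The natural strategy is to run time backward from a curve that has already turned past vertical and show that the backward evolution for a short time produces a graph in the Rayleigh-Taylor stable regime; the forward solution from that graph will then develop a turning singularity within time $2\eps$. Concretely, I would first construct an analytic curve $z^{*}(\alpha)$ that is a compactly supported perturbation of the flat interface, with $\partial_\alpha z^{*}_1(\alpha_0) < 0$ on some nonempty interval around $\alpha_0$ (so $z^{*}$ is past vertical and in the unstable regime there). The goal is then to invoke a Cauchy-Kowalewski type theorem for \eqref{muskatinterface} in a scale of analytic function spaces (Banach spaces of functions holomorphic and bounded in a complex strip of shrinking width), where the evolution is well-posed both forward and backward for short time because the ill-posedness in the unstable regime is a high-frequency phenomenon that is invisible at the analytic level.

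Assuming such an abstract CK result is available for the contour equation \eqref{muskatinterface}, I would obtain a solution $z(\alpha,t)$ on $[-\eps,\eps]$ with $z(\alpha,0)=z^{*}(\alpha)$. The crucial dynamical step is to choose $z^{*}$ so that the sign of $\partial_t(\partial_\alpha z_1)(\alpha_0,0)$ is strictly positive, which amounts to computing the $\alpha$-derivative of the first component of the singular integral on the right hand side of \eqref{muskatinterface} at $\alpha_0$ and exhibiting an explicit profile (for instance, a suitably constructed arch shape with carefully chosen parameters, as in \cite{Castro-Cordoba-Fefferman-Gancedo-LopezFernandez:rayleigh-taylor-breakdown}) for which this computation yields a strictly positive value. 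Combined with smoothness of the flow, this forces $\partial_\alpha z_1(\alpha,-\eps) > 0$ for all $\alpha$ (shrinking $\eps$ if necessary), so the curve at time $-\eps$ is a graph in the stable regime, while at time $0$ it is past vertical.

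Reparametrizing $z(\cdot,-\eps)$ as a graph $y = f_0(x)$ produces an analytic initial datum with $RT(\alpha,0) > 0$ whose forward evolution under \eqref{muskatinterfacegraph} reaches $z^{*}$ in time $\eps$, and hence turns (has $RT<0$ on an open interval of $\alpha$) at some $t^{*} \in (0,\eps]$. To upgrade from this single analytic initial datum to a nonempty open set in $H^4$, I would appeal to local well-posedness in $H^4$ in the stable regime \cite{Cordoba-Gancedo:contour-dynamics-3d-porous-medium}: the solution map is continuous in $H^4$ on compact time intervals where $RT$ stays positive, so $H^4$-small perturbations of $f_0$ remain graphs in the stable regime up to slightly before $t^{*}$, then coincide in $H^4$ (hence in $C^1$) with a small perturbation of $z^{*}$ expressed in the contour formulation, which still satisfies $\partial_\alpha z_1 < 0$ on a nonempty open interval. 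This produces the desired $H^4$-open set.

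The main obstacle is the backward-time existence in Step 2, since \eqref{muskatinterface} is quasilinear and the backward problem is ill-posed in Sobolev spaces. Executing the abstract CK argument requires writing the right hand side of \eqref{muskatinterface} as a bounded bilinear operator between a scale of complex strip spaces with appropriate loss-of-derivative estimates, and verifying the required structural hypothesis on this operator; this is technically the heart of the proof. A secondary subtlety is the passage between the contour parametrization (in which the CK argument lives) and the graph parametrization (in which the statement is made), which requires keeping control of the reparametrization map and of $RT$ under small perturbations of the interface.
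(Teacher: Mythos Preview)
This theorem is not proved in the paper under review; it is quoted from \cite{Castro-Cordoba-Fefferman-Gancedo-LopezFernandez:rayleigh-taylor-breakdown}, and the paper's own Lemma in Section~\ref{SectionTheorem} adapts that same strategy.  Your outline captures the correct architecture used there: Cauchy--Kowalewski local existence in a scale of analytic strip spaces to run the contour equation backward in time, an explicit sign computation on $\partial_\alpha v_1$ at the distinguished point to force the stable/unstable transition, and finally openness in $H^4$ via local well-posedness in the stable regime.

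Two corrections are needed, however.  First, your sign is reversed: if $\partial_\alpha z_1(\alpha_0,0)<0$ and you want $\partial_\alpha z_1(\alpha_0,-\eps)>0$, then $\partial_\alpha z_1$ must be \emph{decreasing} in $t$ near $t=0$, so you need $\partial_t(\partial_\alpha z_1)(\alpha_0,0)<0$, not positive.  Second, and more substantively, starting strictly past vertical (with $\partial_\alpha z_1^*<0$ on an open interval) forces you to run backward a \emph{fixed positive} amount of time to recover a graph, whereas the Cauchy--Kowalewski argument only guarantees a short existence window; you give no reason why these two scales are compatible.  The proof in \cite{Castro-Cordoba-Fefferman-Gancedo-LopezFernandez:rayleigh-taylor-breakdown}, and the Lemma in Section~\ref{SectionTheorem} of this paper, avoid this tension by taking $z^*$ exactly at the borderline, namely $\partial_\alpha z_1^*(\alpha_0)=0$ (with in addition $\partial_\alpha^2 z_1^*(\alpha_0)=0$ and $z_2^*(\alpha_0)=0$) and $\partial_\alpha z_1^*>0$ elsewhere.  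Then an arbitrarily short backward step already lands in the stable regime, and an arbitrarily short forward step lands in the unstable one.  With that modification (and the sign fix) your sketch matches the original argument.
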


After this shift of regime, the curve may lose regularity. This was proved in \cite{Castro-Cordoba-Fefferman-Gancedo:breakdown-muskat}. More general models which take into account finite depth or non-constant permeability that also exhibit turning were studied in \cite{Berselli-Cordoba-GraneroBelinchon:local-solvability-singularities-muskat,GomezSerrano-GraneroBelinchon:turning-muskat-computer-assisted}, where the estimates in the latter one were carried out by rigorous computer-assisted techniques, as opposed to the traditional pencil and paper ones from the former. All these results are local in time, and the techniques employed to prove them rely on a local analysis of the equations, therefore it is not possible to conclude global properties of the solutions from  them. Concerning global existence, the first proof for small initial data was carried out in \cite{Siegel-Caflisch-Howison:global-existence-muskat} in the case where the fluids have different viscosities and the same densities (see also \cite{Cordoba-Gancedo:contour-dynamics-3d-porous-medium} for the setting of this paper: different densities and the same viscosities). 
Global existence for medium-sized initial data was established in \cite{Constantin-Cordoba-Gancedo-Strain:global-existence-muskat,Constantin-Cordoba-Gancedo-RodriguezPiazza-Strain:muskat-global-2d-3d}. In the case where surface tension is taken into account, global existence was shown in \cite{Escher-Matioc:parabolicity-muskat,Friedman-Tao:nonlinear-stability-muskat-capillary-pressure}. Global existence for the confined case was treated in \cite{GraneroBelinchon:global-existence-confined-muskat}. Recently, in \cite{Cheng-GraneroBelinchon-Shkoller:well-posedness-h2-muskat}, a new framework was used to study global existence. The following theorem was proved in \cite{Constantin-Cordoba-Gancedo-Strain:global-existence-muskat}:

\begin{theorem}\label{weakSOLthm}
Suppose that $\|f_0\|_{L^\infty}<\infty$ and $\|\partial_x f_0\|_{L^\infty}<1$.Then there exists a global in time weak solution of \eqref{muskatinterfacegraph} that satisfies 
\begin{align*}
f\in C([0,T]\times\RR)\cap L^\infty([0,T];W^{1,\infty}(\RR)).
\end{align*}
In particular, $f$ is Lipschitz continuous.
\end{theorem}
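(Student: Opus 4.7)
The natural approach is a regularization argument. First, mollify the initial data: setting $f_0^\epsilon := \rho_\epsilon \ast f_0$ produces smooth data satisfying $\|f_0^\epsilon\|_{L^\infty} \leq \|f_0\|_{L^\infty}$ and $\|\partial_x f_0^\epsilon\|_{L^\infty} \leq \|\partial_x f_0\|_{L^\infty} < 1$. Working in the stable regime $\rho^- > \rho^+$, local existence of classical solutions from \cite{Cordoba-Gancedo:contour-dynamics-3d-porous-medium} produces a smooth $f^\epsilon$ on some maximal interval $[0, T^\epsilon)$. The $L^\infty$ maximum principle together with the strict derivative-decay estimate from \cite{Cordoba-Gancedo:maximum-principle-muskat} yield, uniformly in $\epsilon$ and $t \in [0, T^\epsilon)$,
\[
\|f^\epsilon(\cdot,t)\|_{L^\infty} \leq \|f_0\|_{L^\infty}, \qquad \|\partial_x f^\epsilon(\cdot,t)\|_{L^\infty} \leq \|\partial_x f_0\|_{L^\infty} < 1.
\]
Combined with propagation of higher Sobolev norms at fixed $\epsilon$, this rules out finite-time breakdown and upgrades each approximation to a global solution, $T^\epsilon = +\infty$.

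Next, I extract a limit. The family $\{f^\epsilon\}$ is uniformly bounded and uniformly Lipschitz in $x$, hence equicontinuous in $x$. For equicontinuity in time, I use the equation itself: after the substitution $y \mapsto x - y$, the integrand in \eqref{muskatinterfacegraph} becomes $\tfrac{y(\partial_x f^\epsilon(x) - \partial_x f^\epsilon(x-y))}{y^2 + (f^\epsilon(x) - f^\epsilon(x-y))^2}$, whose integral in $y$ can be controlled (locally in $x$) purely in terms of $\|f_0\|_{L^\infty}$ and $\|\partial_x f_0\|_{L^\infty}$, since the strict bound $< 1$ keeps the denominator comparable to $y^2$ near the diagonal while the uniform $L^\infty$ bound on $f^\epsilon$ yields sufficient decay in the tails. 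This gives uniform control of the modulus of continuity in $t$. Arzel\`a--Ascoli combined with a diagonal argument on expanding cylinders $[0,T] \times [-R, R]$ then produces a subsequence $f^{\epsilon_k}$ converging locally uniformly to a limit $f \in C([0,\infty) \times \mathbb{R}) \cap L^\infty([0,\infty); W^{1,\infty}(\mathbb{R}))$ that inherits both uniform bounds.

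Finally, I pass to the limit in a weak formulation. Rewriting \eqref{muskatinterfacegraph} in divergence form $\partial_t f = \tfrac{\rho^- - \rho^+}{2\pi}\partial_x H[f]$, where
\[
H[f](x,t) := \int_{\mathbb{R}} \arctan\!\left(\frac{f(x,t) - f(x-y,t)}{y}\right) dy,
\]
interpreted in a symmetrized/principal-value sense so as to handle the slow decay in $y$, and pairing with a test function $\varphi \in C_c^\infty([0,\infty) \times \mathbb{R})$ integrated by parts in $t$ and $x$, the integrand of $H[f^{\epsilon_k}]$ is uniformly bounded by $\arctan(\|\partial_x f_0\|_{L^\infty}) < \pi/4$ and converges pointwise via the locally uniform convergence of $f^{\epsilon_k}$. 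A dominated convergence argument on the compact support of $\partial_x \varphi$ then identifies $f$ as the desired weak solution.

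I expect the main obstacle to be the uniform-in-$\epsilon$ control of the time regularity together with the careful choice of weak formulation: the singular integral in \eqref{muskatinterfacegraph} requires taming both its behavior near $y = x$ and its slow decay at infinity. The strict condition $\|\partial_x f_0\|_{L^\infty} < 1$ is indispensable throughout, simultaneously keeping the denominator bounded away from zero and placing the nonlinear kernel in a regime where uniform harmonic-analytic estimates can be applied to the approximating sequence.
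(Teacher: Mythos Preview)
The paper does not prove this theorem. Theorem~\ref{weakSOLthm} is stated as background and attributed to \cite{Constantin-Cordoba-Gancedo-Strain:global-existence-muskat}; no argument is given in the text. So there is no ``paper's own proof'' to compare against.

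That said, your sketch is roughly in the spirit of the argument in the cited reference, but it has a genuine gap at the very first step. You mollify $f_0$ and then invoke the local existence theory from \cite{Cordoba-Gancedo:contour-dynamics-3d-porous-medium}, but that theory is set in $H^k$, and an $L^\infty$ function with bounded derivative is generally neither $L^2$ nor in any $H^k$; convolution with $\rho_\epsilon$ does not fix this. Consequently you have no classical solution $f^\epsilon$ to which the maximum principles of \cite{Cordoba-Gancedo:maximum-principle-muskat} can be applied, and the rest of the argument has nothing to stand on. In \cite{Constantin-Cordoba-Gancedo-Strain:global-existence-muskat} this is handled by regularizing the \emph{equation} (adding a dissipative term and working in a scale where existence is available), proving the $L^\infty$ and slope maximum principles directly for the regularized system, and only then passing to the limit. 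Your divergence-form rewriting via $\arctan$ and the compactness step are close to what is actually done there, but the construction of the approximating sequence needs to be redone.
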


The question of whether the constant 1 (which is independent of the physical parameters of the system) in the $L^{\infty}$ bound of the derivative of the initial data is sharp or not is still open. The proof of Theorem \ref{oneone} constructs initial conditions that have parameters that need to be taken big enough to ensure turning. Taken both these two facts into account, our initial motivation was to quantify and bridge the gap, finding or approximating the threshold $C$ for the $L^{\infty}$ norm of $\partial_{x} f_0$ such  that initial data for which $\|\partial_{x} f_0\|_{L^{\infty}} < C$ exist globally, and  for each $\eps > 0$ there exist solutions with $\|\partial_{x} f_0\|_{L^{\infty}} = C+\eps$ which develop turning singularities.

This note is organized as follows: in Section \ref{SectionNumericalResults} we describe the numerical experiments that became the motivation for our main theorem, which we prove in Section \ref{SectionTheorem}. Finally, in Section \ref{SectionDiscussion} we discuss open problems and future work.

\section{Numerical results}
\label{SectionNumericalResults}

In this section we describe numerical experiments which led to  our main result, Theorem \ref{theoremshifting} below.

Our motivation was to find initial data, as small as possible, which develop turning in finite time. For simplicity, we performed simulations in the horizontally periodic scenario. The evolution equation for the interface reads in that case:
\begin{align}
\label{muskatperiodico}
\partial_t z(\al,t) = \frac{\rho^{-} - \rho^{+}}{4\pi} \int_{\mathbb{T}} (z(\al,t) - z(\beta,t))\frac{\sin(z_{1}(\al,t) - z_{1}(\beta,t))}{\cosh(z_{2}(\al,t) - z_{2}(\beta,t)) - \cos(z_{1}(\al,t) - z_{1}(\beta,t))} d\beta.
\end{align}
Picking different initial conditions and evolving them until they either develop turning or flatten out seems like looking for a needle in a haystack. Instead, we took data which we were sure that would turn (the interface has a vertical tangent at a point and the velocity is pointing in the right direction) and run the equation \emph{backwards} in time  for a short time to find our desired initial condition.  

At the linear level and backwards in time, equation \eqref{muskatperiodico} behaves like a backwards heat equation, which is ill posed if the lighter fluid is on top of the denser one. To overcome this difficulty, we use the following heuristic: we perform very small steps backwards in time and at every step, we smooth the function by eliminating all frequencies whose components are below a given threshold. 
The reason behind this heuristic is that the family of solutions which turn over is an open set, and therefore small perturbations (the regularized versions) of the solution should remain in it.
We remark that the backwards evolution is not done with the purpose of finding a numerical solution of the equation, but only to gain intuition about what initial condition to choose. Once we find a suitable candidate, we check its validity by running the equation forward with the candidate. 

The smoothing threshold was set to $10^{-6}$. The time integration was done using a Runge-Kutta 45 scheme. The derivatives were calculated using a spectral method with a cutoff filter given by
\begin{align*}
 \rho(k) = \exp\left(-10\left(\frac{2|k|}{N}\right)^{25}\right), \quad |k| \leq \frac{N}{2}.
\end{align*}
In order to evaluate the singular integrals, we used an alternating quadrature rule \cite{Baker:generalized-vortex-methods-free-surface, Sidi-Israeli:quadrature-methods-periodic-singular}:
\begin{align*}
\partial_t z(\alpha_i,t) \approx 2h\frac{\rho^{-} - \rho^{+}}{4\pi} \sum_{j-i \text{ odd}} (z(\al_i,t) - z(\al_j,t))\frac{\sin(z_{1}(\al_i,t) - z_{1}(\al_j,t))}{\cosh(z_{2}(\al_i,t) - z_{2}(\al_j,t)) - \cos(z_{1}(\al_i,t) - z_{1}(\al_j,t))},
\end{align*}
where $h = \frac{2\pi}{N}$ and $N$ is the total number of nodes. We chose the condition at time $t = 0$ to be
\begin{align*}
 z_1(\al,0) = \al - \sin(\al), \qquad z_2(\al,0) = \frac{3 \sin(\al) + 8 \sin(2\al) + 3 \sin(3\al)}4,
\end{align*}
and $N = 2048$ nodes (see Figure \ref{initialconditionTT}).

\begin{figure}[ht]
\centering
\includegraphics[width=0.9\textwidth]{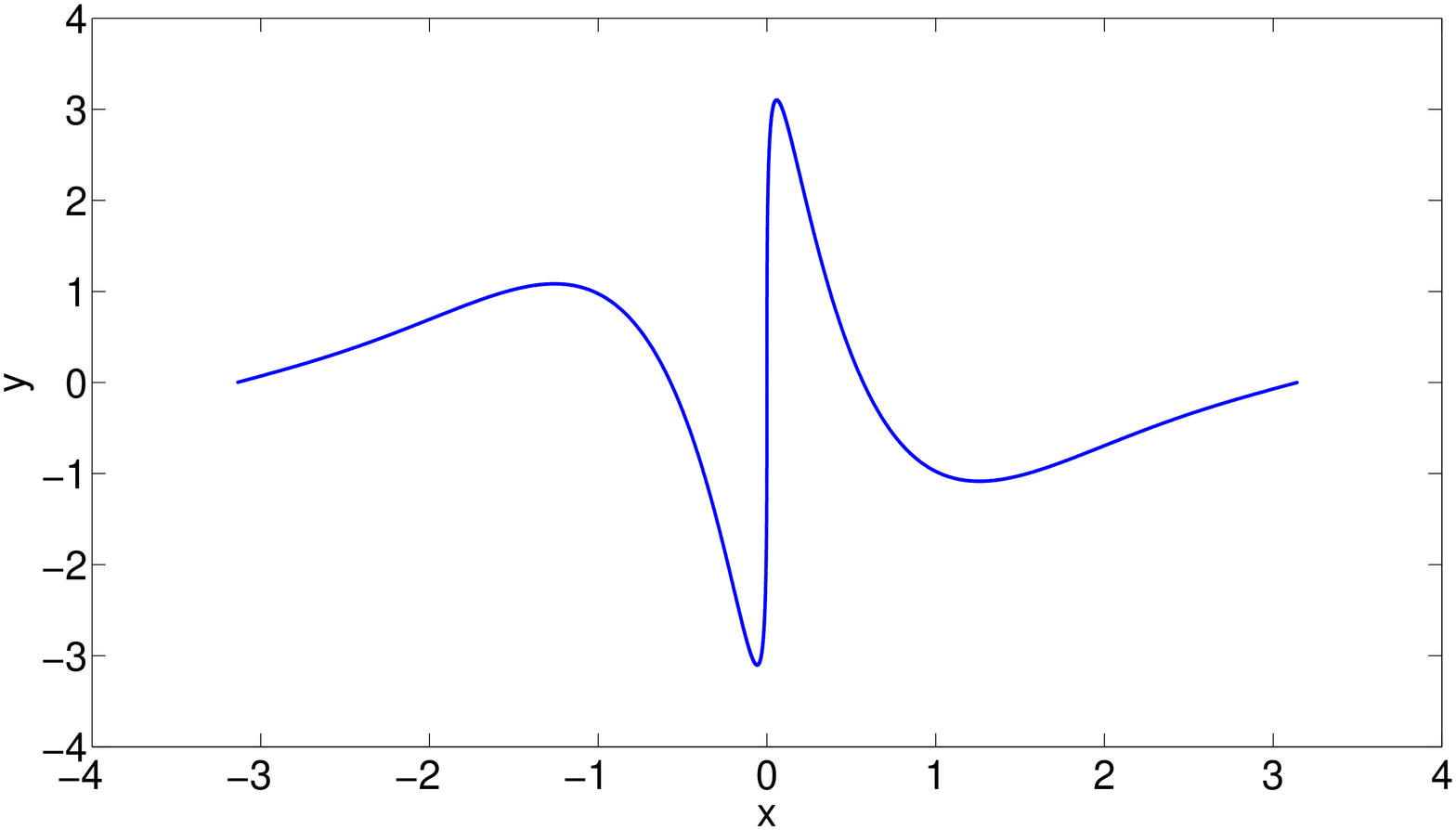}
\caption{$z(\al,0)$ from Section \ref{SectionNumericalResults}.}
\label{initialconditionTT}
\end{figure}

The smoothing was done after every $\Delta t = 4 \cdot 10^{-5}$ and the simulation ran until $t_f = -4.92 \cdot 10^{-2}$. The obtained evolution of $z(\al,t)$ is depicted in Figure \ref{evolutiontt}.

\begin{figure}[ht]
\centering
\includegraphics[width=0.9\textwidth]{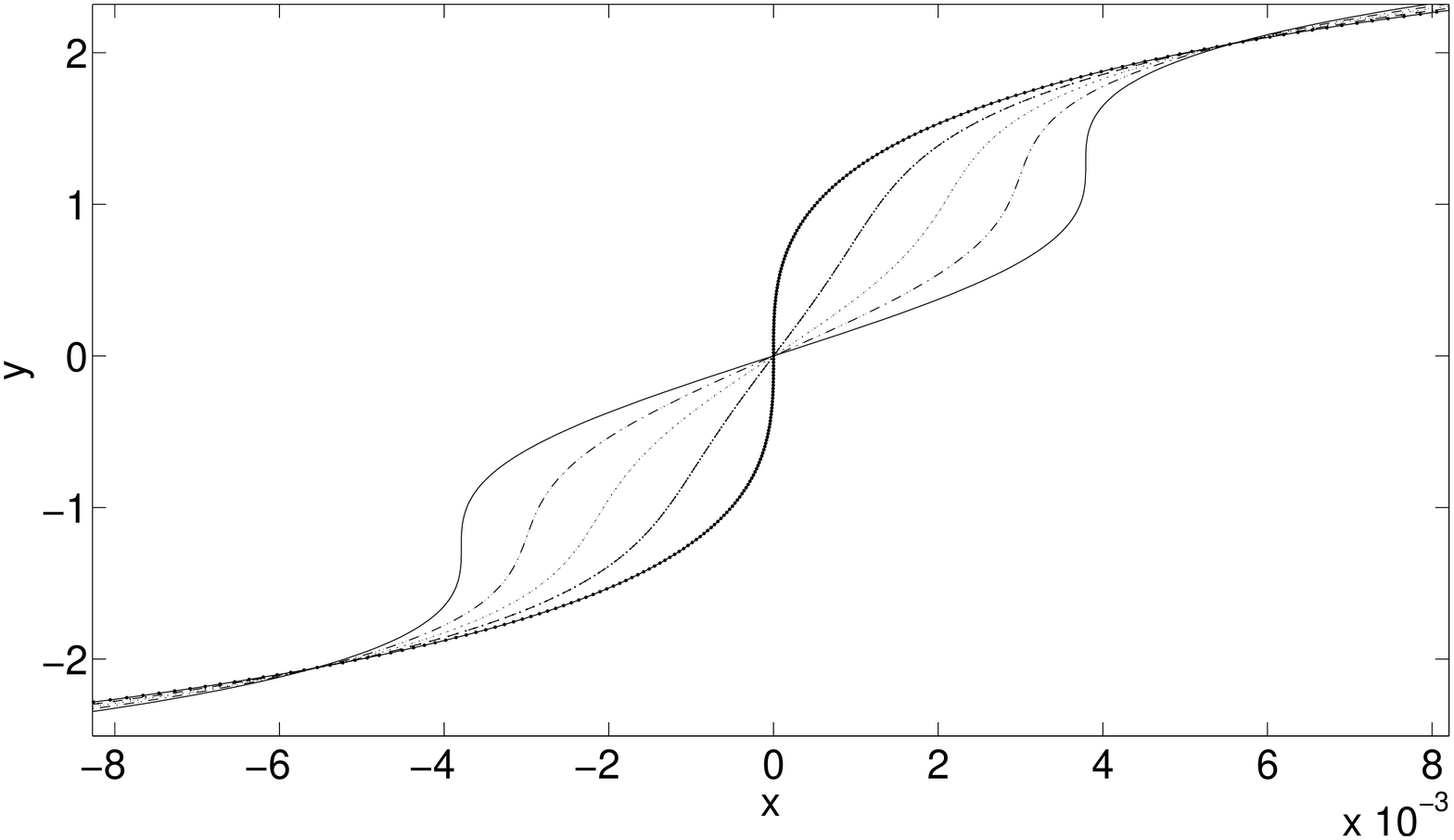}
\caption{Backwards evolution of $z(\al,t)$  from Section \ref{SectionNumericalResults}, with regularization: $t = 0$,  thick line; $t = -1.2 \cdot 10^{-2}$, broken line;  $t = -2.4 \cdot 10^{-2}$, dotted line; $t = -3.6 \cdot 10^{-2}$, broken and dotted line; $t = -4.92 \cdot 10^{-2}$, thin line. Note that this is a closeup of the solution near the origin.}
\label{evolutiontt}
\end{figure}

Finally, we computed the evolution of equation \eqref{muskatperiodico} forward in time taking as initial condition $z(\al,-4.92 \cdot 10^{-2})$, which has two vertical tangents at  approximately $(\pm 3.795 \cdot 10^{-3}, \pm 1.268)$. The initial data is now in the stable regime and the equation is well-posed. We can use the same integration scheme (both in time and space) with no smoothing. We can see that the curve comes from the unstable regime, then moves to the stable one, and finally returns to the unstable regime.  The whole simulation is carried out in the stable regime.

\section{Statement and proof of the main result}
\label{SectionTheorem}


Our main result, Theorem \ref{theoremshifting} below, is motivated by the above numerics (although the mechanics of the constructed solutions will be somewhat different).  This section is devoted to its proof, which will rest on the following two lemmas.


\begin{lemma}
There exists an analytic, odd, asymptotically flat initial condition $z(\al,0)$ whose analytic extension is $H^4$ on the boundary of its strip of analyticity, and also $\al_1>0$ such that the following hold.  We have $\partial_\al z_1(\al,0)
=z_2(\al,0)=0< \partial_\al z_2(\al,0)$ for $\al\in\{0,\pm\alpha_1\}$, while $\partial_\al z_1(\al,0)>0$ for all other $\al\in\mathbb{R}$ (in particular, $z(\al,0)$ is a graph of a function of $x$ with three vertical tangents), such that the corresponding solution $z(\al,t)$ of \eqref{muskatinterface} satisfies ${\rm sgn} (t)\partial_\al z_1(\pm\al_1,t)>0$  and ${\rm sgn} (t)\partial_\al z_1(0,t)<0$ for all times $t$ sufficiently close to 0.
\end{lemma}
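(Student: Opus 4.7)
The plan is to mimic the construction behind Theorem~\ref{oneone}: exhibit an explicit analytic, odd, asymptotically flat initial curve with the prescribed three vertical tangents, compute $\partial_t\partial_\al z_1$ at each tangent point at $t=0$, verify that the three resulting numbers have the required signs, and then invoke a Cauchy-Kowalevski-type local existence theorem in analytic function spaces to propagate the solution in both time directions from $t=0$. Since $\partial_\al z_1(\al^*,0)=0$ at each of the three tangents $\al^*\in\{0,\pm\al_1\}$, Taylor expansion in $t$ will yield
\begin{align*}
\partial_\al z_1(\al^*,t)=t\,\partial_t\partial_\al z_1(\al^*,0)+O(t^2),
\end{align*}
so the sign of $\partial_t\partial_\al z_1(\al^*,0)$ directly controls ${\rm sgn}(t)\partial_\al z_1(\al^*,t)$ for all $t$ sufficiently close to $0$.

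For the construction, I parametrize the initial curve as $z_1(\al,0)=\al+\vp_1(\al)$ and $z_2(\al,0)=\vp_2(\al)$, where $\vp_1,\vp_2$ are odd, real-analytic functions decaying fast enough at infinity so that the curve is asymptotically flat and $H^4$ on the boundary of its strip of analyticity. A natural concrete ansatz is a short linear combination of odd rational profiles such as $\al\bigl((\al-\be_k)^2+\delta^2\bigr)^{-1}\bigl((\al+\be_k)^2+\delta^2\bigr)^{-1}$, or of odd Gaussians, with finitely many tunable parameters. The parameters are chosen so that $1+\vp_1'(\al)$ vanishes precisely at $\al\in\{0,\pm\al_1\}$ and is strictly positive elsewhere, while $\vp_2(0)=\vp_2(\pm\al_1)=0$ and $\vp_2'$ is strictly positive at those three points; these are finitely many conditions on finitely many parameters.

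The analytic heart of the argument is the sign verification. Differentiating \eqref{muskatinterface} in $\al$, evaluating at $t=0$ and $\al=\al^*$, and using $\partial_\al z_1(\al^*,0)=0$ produces an explicit principal-value integral
\begin{align*}
I(\al^*):=\partial_t\partial_\al z_1(\al^*,0)
\end{align*}
depending only on the initial data. Since oddness is preserved by \eqref{muskatinterface} and forces $I(\al_1)=I(-\al_1)$, only two signs have to be checked: $I(0)<0$ and $I(\al_1)>0$. The main obstacle is that neither integral is sign-definite a priori, as each is a principal value over all of $\RR$ that receives contributions from all three tangent regions simultaneously. I would handle this by taking $\al_1$ large, so the three vertical tangents are well separated and the cross-interaction pieces become negligible, while the local contribution near each tangent can be shaped independently through its own Taylor coefficients, in the spirit of the single-tangent turning mechanism isolated in~\cite{Castro-Cordoba-Fefferman-Gancedo-LopezFernandez:rayleigh-taylor-breakdown}. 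If elementary parameter tuning is insufficient to close the signs, a rigorous computer-assisted estimate of the two principal-value integrals, along the lines of~\cite{GomezSerrano-GraneroBelinchon:turning-muskat-computer-assisted}, is the natural backup.

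Once $I(0)<0<I(\al_1)$ are established, I apply the local-in-time existence result for analytic initial data from~\cite{Castro-Cordoba-Fefferman-Gancedo-LopezFernandez:rayleigh-taylor-breakdown}, which furnishes a unique analytic solution $z(\al,t)$ of \eqref{muskatinterface} on some interval $(-\delta,\delta)$ in both time directions; the reason analyticity is essential is that the Rayleigh-Taylor condition vanishes at the three tangents at $t=0$, so the equation is not Sobolev-well-posed on either side of $t=0$. The Taylor expansion above then converts $I(0)<0<I(\al_1)$ into the sign pattern ${\rm sgn}(t)\partial_\al z_1(\pm\al_1,t)>0$ and ${\rm sgn}(t)\partial_\al z_1(0,t)<0$ for all $0<|t|<\delta$, completing the proof.
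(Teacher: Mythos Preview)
Your proposal is correct and follows the same strategy as the paper: reduce to computing the sign of $\partial_t\partial_\al z_1(\al^*,0)$ at each of the three vertical tangents, exploit oddness so that only $I(0)$ and $I(\al_1)$ need to be checked, separate the center tangent from the two outer tangents by a large distance so that cross-interaction terms are negligible, and shape the local profile near each tangent independently so that the center gives $I(0)<0$ while the tails give $I(\al_1)>0$; finally, invoke the analytic local existence theory of~\cite{Castro-Cordoba-Fefferman-Gancedo-LopezFernandez:rayleigh-taylor-breakdown} to propagate in both time directions.

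The only tactical difference is that the paper does not work with analytic profiles from the outset. Instead it writes down explicit piecewise-polynomial building blocks $z^c$ (center) and $z^t$ (tail), for which the relevant integrals $I_{cc}$ and $I_{tt}$ can be evaluated in closed form by hand (yielding $I_{cc}\approx -0.025<0$ and $I_{tt}>\tfrac14$), bounds the cross terms $I_{tc},I_{ct}$ by $O(R^{-3})$, and only at the very end replaces the resulting $z^R$ by a sufficiently close analytic perturbation $z^R_{ana}$. This buys explicit, elementary sign verification with no need for computer assistance, whereas your rational-or-Gaussian ansatz, while cleaner in regularity, would likely force you into the numerical-backup route you mention. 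Either way the argument closes.
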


By asymptotically flat we mean $z(\al,0)\approx(\al,0)$ and $\partial_\al z(\al,0)\approx (1,0)$ for $|\al|\gg 1$.  Therefore, $z(\al,t)$ will be a graph of a function of $x$ everywhere except near $\al=0$ for small $t>0$, and everywhere except near $\al=\pm\al_1$ for small $t<0$.

\begin{proof}
Since in the following we will mostly consider $t=0$, let us denote $z(\al)=z(\al,0)$. 
Following the arguments of \cite{Castro-Cordoba-Fefferman-Gancedo-LopezFernandez:rayleigh-taylor-breakdown}, we find that if  $\al_0$ is any point with $z_1'(\al_0)=z_1''(\al_0)=z_2(\al_0)=0$, then the corresponding velocity $v$ at time $t=0$ satisfies
\begin{align*}
\partial_{\al} v_1(z(\al_0)) = z_2'(\al_0) \int_{\RR} \frac{(z_1(\beta)-z_1(\al_0))z_1'(\beta)z_2(\beta)}{[ (z_1(\al_0) - z_1(\beta))^2+z_2(\beta)^2]^2}d\beta.
\end{align*}
We obviously have the following:
\begin{itemize}
\item If $\partial_{\al} v_1(z(\al_0)) > 0$, then ${\rm sgn}(t) \partial_\al z(\al,t)>0$ for $(\al,t)$ close to $(\al_0,0)$.  In particular, the curve will be in the stable regime near  $\al_0$ for small $t>0$.
\item If $\partial_{\al} v_1(z(\al_0)) < 0$, then ${\rm sgn}(t) \partial_\al z(\al,t)<0$ for $(\al,t)$ close to $(\al_0,0)$.  In particular, the curve will be in the unstable regime (i.e., it will turn over)  near  $\al_0$ for small $t>0$.
\end{itemize}

Our $z(\al)$ will consist of three building blocks: center and two tails.  First let
\begin{align*}
 z^{t}_1(\al) = 
\left\{
\begin{array}{cc}
\al & -2 \leq \al \leq -1 \\
\al^3 & -1 \leq \al \leq 1 \\
\al & 1 \leq \al \leq 2 \\
\end{array}
\right.
\quad
z^{t}_2(\al) = 
\left\{
\begin{array}{cc}
-\al-2 & -2 \leq \al \leq -1 \\
 \al & -1 \leq \al \leq 1 \\
-\al + 2 & 1 \leq \al \leq 2 \\
\end{array}
\right.
\end{align*}
and 
\begin{align*}
 z_1^{c}(\al) = 
\left\{
\begin{array}{cc}
\al & -7 \leq \al \leq -1 \\
\al^3 & -1 \leq \al \leq 1 \\
\al & 1 \leq \al \leq 7 \\
\end{array}
\right.
\quad
z_2^{c}(\al) = 
\left\{
\begin{array}{cc}
\frac{3}{2}\al + \frac{21}{2} & -7 \leq \al \leq -5 \\
3 & -5 \leq \al \leq -2 \\
-5\al - 7 & -2 \leq \al \leq -1 \\
 3\al - \al^{3} & -1 \leq \al \leq 1 \\
-5\al + 7 & 1 \leq \al \leq 2 \\
-3 & 2 \leq \al \leq 5 \\
\frac{3}{2}\al - \frac{21}{2} & 5 \leq \al \leq 7 \\
\end{array}
\right.
\end{align*}
(note that both are odd).  
We will now consider curves $z^{R}(\al)$ of the following form:
\begin{align*}
 z_1^{R}(\al) = 
\left\{
\begin{array}{cc}
z_1^{t}(\al+R)-R & -R-2 \leq \al \leq -R+2 \\
z_1^{c}(\al) & -7 \leq \al \leq 7 \\
z_1^{t}(\al-R)+R & R-2 \leq \al \leq R+2 \\
\al & \text{otherwise}
\end{array}
\right.
\quad
 z_2^{R}(\al) = 
\left\{
\begin{array}{cc}
z_2^{t}(\al+R) & -R-2 \leq \al \leq -R+2 \\
z_2^{c}(\al) & -7 \leq \al \leq 7 \\
z_2^{t}(\al-R) & R-2 \leq \al \leq R+2 \\
0 & \text{otherwise}
\end{array}
\right.
\end{align*}
 where $R>9$ will be fixed later.  A sketch of the initial condition $z^R(\al)$, which satisfies all hypotheses of the lemma (with $\al_1=R$) except regularity, appears in Figure \ref{initialconditionshift}.  We will now show that for small $t>0$, the corresponding solution of \eqref{muskatinterface} will turn over near $\al=0$ but not near $\al=\pm R$.  To do so, 
 we will split the integrals in the formula for $\partial_{\al} v_1(z^R(\al_0))$ ($\al_0=0,\pm R$) into  center-center, tail-tail, center-tail, and tail-center contributions (note also that $(z^R_2)'(\al_0)>0$).  We will compute the first two and only estimate the last two since they will be made arbitrarily small by choosing $R$ large enough.

\begin{figure}[ht]
\centering
\includegraphics[width=0.9\textwidth]{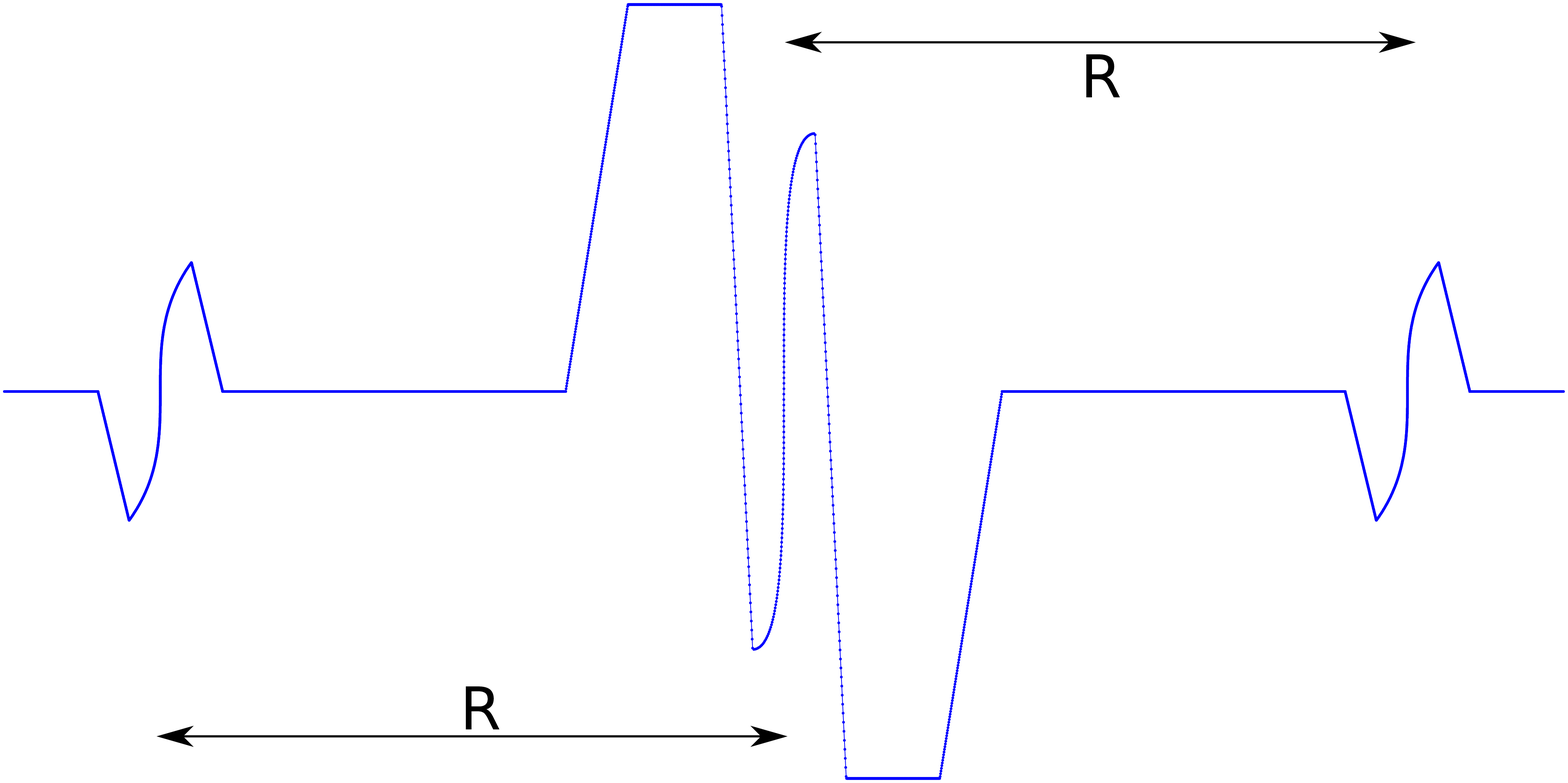}
\caption{$z^{R}(\al)$ from Section \ref{SectionTheorem}.}
\label{initialconditionshift}
\end{figure}

We start with center-center. Since $z^c$ is odd, the quantity we are interested in simplifies to
\begin{align*}
I_{cc}=2 \int_{0}^{\infty} \frac{z^c_1(\beta)(z^c_1)'(\beta)z^c_2(\beta)}{(z^c_1(\beta)^2+z^c_2(\beta)^2)^2}d\beta  = I_{cc}^{1} + I_{cc}^{2} + I_{cc}^{3} + I_{cc}^{4},
\end{align*}
%
%
%
%
%
where
\begin{align*}
I_{cc}^{1} & = -2\int_{0}^{1} \frac{3x^2(x^2-3)}{(2x^4-6x^2+9)^2} dx \\
I_{cc}^{2} & = 2\int_{1}^{2} \frac{7 x - 5 x^2}{(26x^2 - 70 x + 49)^{2}} dx \\
I_{cc}^{3} & = -2\int_{2}^{5} \frac{3x}{(9+x^2)^2} dx \\
I_{cc}^{4} & = \int_{5}^{7} \frac{3x^2-21x}{\left(\frac{441}{4} - \frac{63x}{2}+  \frac{13x^2}{4}\right)^2} dx.
\end{align*}
The last three integrals can be explicitly calculated:
\begin{align*}
I_{cc}^{2} & = \left.\frac{1}{26}\left(\frac{-7 + 10 x}{ 26x^2 - 70x + 49}\right)\right|_{x=1}^{x=2}
= \frac{1}{65} \\
I_{cc}^{3} & = \left.3\frac{1}{9+x^2}\right|_{x=2}^{x=5} = -\frac{63}{442} \\
I_{cc}^{4} & = \left.\frac{48 (7 - 2 x)}{338 x^2 - 3276x + 11466}\right|_{x=5}^{x=7} = -\frac{3}{119}.
\end{align*}
The first one can also be calculated explicitly, with slightly more effort. For the sake of simplicity, we do not present here the full symbolic integral, only an approximation of the final result:
\begin{align*}
I_{cc}^{1} & \approx 0.127271158.
\end{align*}
Adding all the contributions, we obtain
\begin{align*}
I_{cc} \approx -0.0250882 < 0.
\end{align*}


Next, the tail-center term (i.e., the contribution of the tails to $\partial_{\al} v_1(z^R(0))$) is
\begin{align*}
I_{tc} = 2\int_{-2}^{2} \frac{(R+z_1^{t}(\beta)) (z_1^{t})'(\beta)z_2^{t}(\beta)}{[ (R+z_1^{t}(\beta))^2+z_2^{t}(\beta)^2]^2}d\beta.
\end{align*}
The explicit expression for $z^t$ yields the easy bound
\begin{align*}
 |I_{tc}| \leq 2\frac{(R+2)\cdot 3 \cdot 1}{(R-2)^4} \cdot 4 
\end{align*}
Hence, by choosing $R$ large enough we can ensure that
\begin{align}
 \label{conditionRcenter}
 I_{cc} + I_{tc} < 0.
\end{align}

Let us now consider the tail-tail contribution. Because of symmetry  it suffices to consider $\al_0 = R$.  Then we have
\begin{align*}
 I_{tt} = I_{tt}^{1} + I_{tt}^{2} + I_{tt}^{3},
\end{align*}
where (after using the explicit expression for $z^t$)
\begin{align*}
I_{tt}^{1} & = -\int_{-2}^{-1}  \frac{x (2 + x)}{(2x^2 + 4x + 4)^2} dx \\
I_{tt}^{2} & = \int_{-1}^{1} \frac{3x^2}{(1+x^4)^2} dx \\
I_{tt}^{3} & = -\int_{1}^{2} \frac{x(-2+x)}{(2x^2 -4x + 4)^2} dx. \\
\end{align*}
The first  and last integral can again easily be evaluated explicitly:
\begin{align*}
 I_{tt}^{1} & = \left.\frac{1 + x}{4 (2 + 2 x + x^2)}\right|_{x=-2}^{x=-1} = \frac{1}{8} \\
I_{tt}^{3} & = \left.\frac{x-1}{4 (2 - 2 x + x^2)}\right|_{x=1}^{x=2} = \frac{1}{8},
\end{align*}
whereas $I_{tt}^{2} > 0$ since its integrand is positive. We can thus conclude that $I_{tt} > \frac{1}{4}$.


Finally, we bound the center-tail contribution
\begin{align*}
 I_{ct} = I_{ct}^{1} + I_{ct}^{2},
\end{align*}
where
\begin{align*}
 I_{ct}^{1} & = \int_{-7}^{7} \frac{(z^c_1(\beta)-R) (z^c_1)'(\beta)z^c_2(\beta)}{[ (R-z^c_1(\beta))^2+z^c_2(\beta)^2]^2}d\beta \\
 I_{ct}^{2} & = \int_{-R-2}^{-R+2} \frac{(z^t_1(\beta)-R) (z^t_1)'(\beta)z^t_2(\beta)}{[ (R-z^t_1(\beta))^2+z^t_2(\beta)^2]^2}d\beta.
\end{align*}
We can easily obtain the bounds
\begin{align*}
 |I_{ct}^{1}| \leq \frac{(R+7)\cdot 3 \cdot 3 \cdot 14}{(R-7)^4} \\
|I_{ct}^{2}| \leq \frac{(2R+2) \cdot 3 \cdot 1 \cdot 4}{(2R-2)^{4}},
\end{align*}
so again, by choosing $R$ large enough we can ensure that
\begin{align}
 \label{conditionRtail}
 I_{tt} + I_{ct} > 0.
\end{align}

We therefore choose $R$ such that conditions \eqref{conditionRcenter} and \eqref{conditionRtail} are satisfied. 
%
Then we let $z^{R}_{ana}(\al)$ be a sufficiently close analytic
perturbation of $z^{R}(\al)$ which satisfies all the hypotheses of the lemma with $\al_1=R$ (so it also has vertical tangents at $\alpha = 0, \pm R$).  It is not difficult to see that this can be done so that $\partial_{\al} v_1(z^R_{ana}(\al_0))$ has the same sign as $\partial_{\al} v_1(z^R(\al_0))$ for $\al_0=0,\pm R$.  Then $z(\al,0)=z^{R}_{ana}(\al)$ will be the desired initial condition.
\end{proof}

\begin{lemma}\label{lemaestabilidad}
 Let $z(\al,t)$ and $w(\al,t)$ be two analytic solutions of equation \eqref{muskatinterface} with initial conditions $z_0(\al)$ and $w_0(\al)$ respectively. Let $d(\al,t) = z(\al,t) - w(\al,t)$, and for $\al,\beta\in \RR$ let 
\begin{align*}
F(z,t)(\al,\beta)=\frac{\beta^2}{|z(\al,t)-z(\al-\beta,t)|^2}.
\end{align*}
Assume $F(z,0), F(w,0) \in L^{\infty}$ and consider the energy
\begin{align*}
E(t) = \|d(\cdot,t)\|^{2}_{H^{4}(S(t))}  = \sup_{|c| < \xi(t)} \int_{\mathbb{R}} \left(|d(\al+ ic,t)|^2+|\da^4d(\al+ ic,t)|^2 \right) d\al,
\end{align*}
with $S(t)=\{\al + i c : |c| < \xi(t)\}$ the strip of analyticity of the function $d(\cdot,t)$.
Then there exists a polynomial $P(x,y,q,r,s)$ such that
\begin{align*}
 \frac{d}{dt}E(t) \leq P \left(E(t),\|z_0\|_{H^{4}(S(0))},\|w_0\|_{H^{4}(S(0))},\|F(z,0)\|_{L^{\infty}(S(0))},\|F(w,0)\|_{L^{\infty}(S(0))} \right),
\end{align*}
where the $L^{\infty}$ norm in the strip is defined as
\begin{align*}
 \|F(z,t)\|_{L^{\infty}(S(t))} = \sup_{|c| < \xi(t)}\|F(z,t)(\alpha+ic,\beta)\|_{L^{\infty}(\alpha,\beta)}.
\end{align*}

\end{lemma}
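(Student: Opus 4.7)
The plan is to follow the Cauchy-Kovalevskaya-type energy framework for Muskat on strips of analyticity that underlies the local existence theory of \cite{Castro-Cordoba-Fefferman-Gancedo-LopezFernandez:rayleigh-taylor-breakdown}, adapted to the difference $d = z - w$. A preliminary observation is that for $t$ small, the time-$t$ analogues of $\|z_0\|_{H^4(S(0))}$, $\|w_0\|_{H^4(S(0))}$, $\|F(z,0)\|_\infty$, and $\|F(w,0)\|_\infty$ are polynomially controlled by the time-zero values themselves (standard a priori/chord-arc estimates already implicit in the local well-posedness theory). This is what lets the polynomial $P$ in the conclusion depend only on initial data rather than on time-$t$ quantities.

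Next, subtract the two copies of \eqref{muskatinterface} to obtain an equation for $d$, and telescope every difference of the form $\frac{A(z)}{B(z)^2} - \frac{A(w)}{B(w)^2}$ so that it factors as a smooth-in-$(z,w,F(z),F(w))$ coefficient times a factor carrying $d$ or $\partial_\al d$. The Taylor-type identity
\begin{align*}
z(\al,t) - z(\al - \beta,t) = \beta \int_0^1 \partial_\al z(\al - s\beta,t)\, ds,
\end{align*}
combined with $F(z,t),F(w,t)\in L^\infty$, extracts the $\beta$ from each denominator and renders all resulting kernels weakly singular. On each horizontal line $\RR + ic$ with $|c| < \xi(t)$, the right-hand side is then a sum of singular-integral operators acting on $d$. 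Differentiating $\int (|d|^2 + |\partial_\al^4 d|^2)\, d\al$ in $t$ and taking the supremum in $c$ (absorbing any contribution coming from the evolution of the width $\xi(t)$ into $P$), I would handle the $L^2$ part directly by Cauchy-Schwarz and Sobolev embedding, and attack the $\dot H^4$ part by distributing four $\al$-derivatives through the singular integrals via Leibniz; all resulting terms in which fewer than four derivatives fall on $d$ are controlled by H\"older's inequality.

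The main obstacle is the single $\dot H^4$ term in which all four derivatives land on $d$, which a naive bound would show to cost one derivative. The remedy is to exploit the antisymmetry and principal-value structure of the Birkhoff-Rott kernel to recast this top-order term as a Calder\'on-type commutator of $\partial_\al^4 d$ with an $H^3$-smooth coefficient, in the spirit of the single-solution energy estimates of \cite{Castro-Cordoba-Fefferman-Gancedo-LopezFernandez:rayleigh-taylor-breakdown}. Standard commutator bounds then control it in $L^2$ by a polynomial in $\|z\|_{H^4},\|w\|_{H^4},\|F(z)\|_\infty,\|F(w)\|_\infty$ times $\|\partial_\al^4 d\|_{L^2}$, which plugged back into the energy inequality gives a bound polynomial in $E(t)$. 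Summing the $L^2$ and $\dot H^4$ estimates and invoking the preliminary a priori bounds to replace time-$t$ norms by time-zero ones yields the claimed polynomial inequality.
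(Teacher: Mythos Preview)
Your proposal is correct and follows essentially the same approach as the paper. The paper's own proof is a three-sentence pointer to \cite[Section 6, p.~940, eq.~(44)]{Castro-Cordoba-Fefferman-Gancedo-LopezFernandez:rayleigh-taylor-breakdown}: write the equation for $d$, apply the local-existence energy estimates from that reference, and then invoke the local existence theorems for $z$ and $w$ separately to replace time-$t$ norms by time-zero ones; your outline is a faithful (and more explicit) unpacking of exactly those steps.
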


\begin{proof}
The proof appears in \cite[Section 6, p. 940, eq. (44)]{Castro-Cordoba-Fefferman-Gancedo-LopezFernandez:rayleigh-taylor-breakdown}. One only has to write the equation that $d(\al,t)$ satisfies, then apply the same estimates as in the local existence theorem from \cite{Castro-Cordoba-Fefferman-Gancedo-LopezFernandez:rayleigh-taylor-breakdown}, and finally control the evolution of the norms of $z$ and $w$ in terms of the norms of $z_0$ and $w_0$ via their respective local existence theorems.
\end{proof}




We are now ready to prove our main result.

\begin{theorem}\label{theoremshifting}
 There exists an analytic initial curve $z(\al,0)$ whose analytic extension is $H^4$ on the boundary of its strip of analyticity, and also times $-T<t_1<0<t_0<T$ and $\varepsilon>0$ such that the following hold.  The corresponding solution of \eqref{muskatinterface}  exists for $t\in (-T,T)$ in the class of analytic functions of $\al$ whose analytic extensions are $H^4$ on the boundaries of their strips of analyticity, and it is a graph of a function of $x$ for each $t\in(t_1,t_0)$ (i.e., it is in the stable regime for these $t$) but not for $t\in(t_1-\varepsilon,t_1)\cup (t_0,t_0+\varepsilon)$ (i.e., it is in the unstable regime for these $t$). The solution develops vertical tangents at times $t_1$ and $t_0$. 
\end{theorem}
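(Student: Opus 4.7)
The plan is to obtain the initial datum $z(\alpha,0)$ of the theorem as a small analytic perturbation of the initial condition $z^{*}(\alpha,0)$ produced by the first lemma of this section. That lemma provides ``transit velocities'' $\partial_{t}\partial_{\al} z_{1}^{*}$ at the three vertical-tangent points $\al\in\{0,\pm\al_{1}\}$ with explicit and crucially opposite signs: positive at $\pm\al_{1}$, negative at $0$. If the perturbation is arranged so that $\partial_{\al}z_{1}$ becomes slightly positive at all three points at time $0$, then its zeros will be displaced from $t=0$ to two different nearby times: a negative time $t_{1}$ at $\pm\al_{1}$ and a positive time $t_{0}$ at $0$. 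This yields exactly the stability-shifting behaviour.

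Concretely, let $\phi$ be an odd, real-analytic, rapidly decaying function whose analytic extension is $H^{4}$ on the boundary of the strip of analyticity of $z^{*}(\cdot,0)$ and which satisfies $\phi'(0)>0$ and $\phi'(\pm\al_{1})>0$ (for instance $\phi(\al)=\al\,e^{-\al^{2}/L^{2}}$ with $L$ large). For a small parameter $\delta>0$ set
\begin{align*}
z(\al,0) := z^{*}(\al,0) + \delta\bigl(\phi(\al),\,0\bigr).
\end{align*}
Since $\partial_{\al}z_{1}^{*}(\al,0)>0$ for $\al\notin\{0,\pm\al_{1}\}$ and vanishes at those three points, the perturbation lifts $\partial_{\al}z_{1}(\al,0)$ strictly above zero for every $\al$, so $z(\cdot,0)$ is a graph. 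The local analytic existence theory invoked in the first lemma supplies some $T_{0}>0$ and an analytic solution $z(\al,t)$ on $(-T_{0},T_{0})$. Applying Lemma \ref{lemaestabilidad} to the pair $(z,z^{*})$ and running a Gronwall argument on the polynomial differential inequality (keeping $T_{0}$ uniform in $\delta$, which is feasible since $E(0)=O(\delta^{2})$), one obtains
\begin{align*}
\|z(\cdot,t)-z^{*}(\cdot,t)\|_{H^{4}(S(t))}=O(\delta) \qquad \text{for } t\in(-T_{0},T_{0}),
\end{align*}
and hence, via Sobolev embedding in the strip, $\partial_{\al}z_{1}(\al,t)=\partial_{\al}z_{1}^{*}(\al,t)+\delta\phi'(\al)+O(\delta t)$ uniformly in $\al$.

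From the first lemma, Taylor expansion in $t$ around $0$ yields $\partial_{\al}z_{1}^{*}(\pm\al_{1},t)=c_{1}t+O(t^{2})$ with $c_{1}>0$ and $\partial_{\al}z_{1}^{*}(0,t)=-c_{0}t+O(t^{2})$ with $c_{0}>0$. Combining these with the perturbation estimate, the implicit function theorem produces, for $\delta$ small enough, unique zeros
\begin{align*}
t_{1}=-\frac{\delta\,\phi'(\pm\al_{1})}{c_{1}}+O(\delta^{2})<0, \qquad t_{0}=\frac{\delta\,\phi'(0)}{c_{0}}+O(\delta^{2})>0,
\end{align*}
at which $\partial_{\al}z_{1}(\pm\al_{1},\cdot)$ changes sign from negative to positive and $\partial_{\al}z_{1}(0,\cdot)$ from positive to negative, respectively. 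Meanwhile, on any compact set disjoint from $\{0,\pm\al_{1}\}$ the bound $\partial_{\al}z_{1}^{*}(\al,0)\geq\eta>0$ transfers, by continuity in $t$ and smallness of $\delta$, to $\partial_{\al}z_{1}(\al,t)>\eta/2$ on the whole interval $(-T_{0},T_{0})$. Putting these pieces together, $z(\cdot,t)$ is globally a graph precisely for $t\in(t_{1},t_{0})$, develops vertical tangents at $t=t_{1},t_{0}$, and is overturned near $\pm\al_{1}$ for $t$ slightly less than $t_{1}$ and near $0$ for $t$ slightly greater than $t_{0}$.

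The main technical step is the Gronwall argument: the existence time $T_{0}$ and the implicit constants in the $O(\delta)$ bound must be uniform in $\delta$, so that the shifting window $(t_{1},t_{0})$, which is of size $O(\delta)$, lies safely inside the interval on which the perturbative comparison to $z^{*}$ is valid. Once this uniformity is secured, the rest of the argument reduces to continuous dependence and the intermediate value theorem applied pointwise at $\al\in\{0,\pm\al_{1}\}$.
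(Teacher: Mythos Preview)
Your proposal is correct and follows essentially the same approach as the paper's own proof: perturb the vertical-tangent initial datum from the first lemma so that $\partial_\al z_1$ becomes slightly positive at all three critical points, invoke the analytic local existence theory (uniformly in the perturbation size), and use Lemma~\ref{lemaestabilidad} plus continuous dependence to locate nearby turnover times $t_1<0<t_0$. The paper's argument is terser---it simply posits a perturbation $z^\delta$ with $\partial_\al z_1^\delta(\al_0)=\delta$ at $\al_0\in\{0,\pm R\}$ and asserts that $t_0(\delta)>0>t_1(\delta)$ with both tending to $0$---whereas you spell out an explicit $\phi$, the Taylor expansions, and the implicit-function step; but the substance is the same.
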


\begin{proof}
For $\delta\ge 0$, let $z^{\delta}(\al)$ be a sufficiently close perturbation of $z^{R}_{ana}(\al)$ with the same properties except that its tangents at $\al=0,\pm R$ are $\delta$ away from vertical (in the sense of $\partial_{\al} z_1^\delta(\al_0)=\delta$ for $\al_0=0,\pm R$, while $\partial_{\al} z_2^\delta(\al_0)$ remains away from 0, uniformly in $\delta$).  It is not difficult to see that this can be chosen with a radius of analyticity away from 0, uniformly in $\delta$.  Then the solutions $z^\delta(\al,t)$ of \eqref{muskatinterface}  corresponding to initial conditions $z^\delta(\al)$ exist in the class of analytic functions on the interval $(-T,T)$ for some $\delta$-independent $T>0$.

Let $t_0(\delta)$ be the time of turnover of $z^\delta(\al,t)$ near $\al_0=0$, and let $t_1(\delta)$ be the time of turnover near $\al_0=\pm R$ (these exist if $z^{\delta}(\al)$ is close enough to $z^{R}_{ana}(\al)$). We have $t_0(0)=t_1(0) = 0$, as well as  $t_0(\delta) > 0>t_1(\delta)$ for sufficiently small $\delta$ and $\lim_{\delta\to 0} t_0(\delta)=\lim_{\delta\to 0} t_1(\delta)=0$, due to Lemma \ref{lemaestabilidad}.  Choosing $\delta$ such that $-t_1(\delta),t_0(\delta)<T$ yields the desired initial condition $z(\al,0)=z^\delta(\al)$.
\end{proof}

\section{Further discussion}
\label{SectionDiscussion}

We now discuss a couple of remarks related to Theorem \ref{theoremshifting}. We plan to address these in the future.

The first is the question of narrowing the gap between Theorems \ref{oneone} and \ref{weakSOLthm}, which was our initial motivation.  Based on our numerical simulations,  we propose the following conjecture:

\begin{conjecture}
There exists a solution $f(x,t)$ of equation \eqref{muskatinterfacegraph} that has $\|f(\cdot,0)\|_{L^{\infty}} = 50$ and turns over in finite time. 
\end{conjecture}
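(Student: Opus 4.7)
The plan is to exhibit a specific analytic graph initial condition $f(x,0)$ with $\|f(\cdot,0)\|_{L^\infty}=50$ and to prove, via a rigorous computer-assisted argument in the spirit of \cite{GomezSerrano-GraneroBelinchon:turning-muskat-computer-assisted}, that the corresponding solution $z(\al,t)$ of \eqref{muskatinterface} develops a vertical tangent at some finite time $t_*>0$. The mechanism is the one exploited in Theorem \ref{oneone}: certify a pair $(t_*,\al_0)$ at which $\partial_\al z_1(\al_0,t_*)=0$ and $\partial_\al v_1(z(\al_0,t_*))<0$, which by the local turnover criterion of \cite{Castro-Cordoba-Fefferman-Gancedo-LopezFernandez:rayleigh-taylor-breakdown} forces the curve to fail to be a graph for $t$ slightly past $t_*$. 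Unlike Theorem \ref{oneone}, where the construction is purely qualitative, here the bound $\|f(\cdot,0)\|_{L^\infty}=50$ is imposed as an explicit, quantitative side constraint that must be verified on whatever concrete profile one chooses.

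To produce the candidate, first follow the heuristic of Section \ref{SectionNumericalResults}: start from a numerical profile $\tilde z(\al)$ that already has a vertical tangent with $\partial_\al v_1(\tilde z(\al_0))<0$, evolve \eqref{muskatperiodico} backwards in time with the regularization described there until the graph reaches $\|f\|_{L^\infty}=50$, freeze the resulting profile, and approximate it by a trigonometric polynomial with interval-arithmetic coefficients admitting an analytic extension in a strip of positive width. The rigorous step is then a finite forward-in-time computation with a validated integrator for \eqref{muskatinterface}, producing certified enclosures for $z(\al,t)$, $\partial_\al z_1(\al,t)$, and $\partial_\al v_1(z(\al,t))$, from which one reads off $(t_*,\al_0)$ with the required sign configuration. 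The condition $\|f(\cdot,0)\|_{L^\infty}=50$ reduces to a pointwise bound on the chosen trigonometric polynomial, decoupled from the turnover computation. Lemma \ref{lemaestabilidad} enters to propagate the enclosure from the numerical candidate to the true solution: the integrator must simultaneously track $\|F(z,t)\|_{L^\infty(S(t))}$ and the $H^4$ norm on the strip, shrinking $\xi(t)$ in a controlled way so that the estimates of Lemma \ref{lemaestabilidad} remain useful all the way up to $t_*$.

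The principal obstacle is quantitative. At amplitude $50$ the problem is well outside any perturbative regime, the kernel in \eqref{muskatinterface} is genuinely nonlocal and singular, and all modes of $z$ interact strongly, so the enclosures for $\partial_\al v_1(z(\al_0,t_*))$ widen rapidly at each integration step while the margin by which they must remain negative is typically small. Achieving a favourable ratio between these two scales will likely require either a carefully designed initial profile for which $\partial_\al v_1$ at $t_*$ is robustly negative, with a margin exceeding the certified error bars, or sharper a priori bounds on the Muskat semigroup in the analytic class than those encoded in Lemma \ref{lemaestabilidad}. This balance between kernel growth at large amplitude and the precision of the validated integrator is, in our view, the real content of the conjecture.
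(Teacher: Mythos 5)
The statement you address is a conjecture; the paper does not prove it and only sketches, in the final paragraph of Section~\ref{SectionDiscussion}, a strategy of the same general flavour as yours. Both outlines rest on a numerically-guided candidate and a computer-assisted shadowing argument in the spirit of \cite{Castro-Cordoba-Fefferman-Gancedo-LopezFernandez:rayleigh-taylor-breakdown} and \cite{GomezSerrano-GraneroBelinchon:turning-muskat-computer-assisted}. But two places where your outline departs from the paper's are genuinely weaker, not merely different in emphasis.

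First, the paper's terminal criterion is to certify the open condition $\partial_\al z_1(\al,T)<0$ at an explicit later time $T$ and some $\al$. Since the initial datum is a graph, this alone forces a turnover at some intermediate time, and an open inequality is exactly what interval arithmetic can verify. Your criterion instead asks to certify a pair $(t_*,\al_0)$ with $\partial_\al z_1(\al_0,t_*)=0$ together with $\partial_\al v_1(z(\al_0,t_*))<0$, invoking the local criterion underlying Theorem~\ref{oneone}. That criterion, and in particular the displayed integral formula for $\partial_\al v_1$, is derived under the extra hypotheses $z_1'(\al_0)=z_1''(\al_0)=z_2(\al_0)=0$; it is a device for \emph{constructing initial data} with a vertical tangent (as in the lemma preceding Theorem~\ref{theoremshifting}), and there is no reason these degeneracies should hold simultaneously at a generic downstream time $t_*$. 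Certifying an exact zero of $\partial_\al z_1$ and the cancellation $z_1''(\al_0,t_*)=z_2(\al_0,t_*)=0$ with validated numerics is not feasible; the robust route is the paper's sign change.

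Second, Lemma~\ref{lemaestabilidad} is purely qualitative: it only asserts the \emph{existence} of some polynomial $P$ controlling $\frac{d}{dt}E(t)$, with no explicit constants. The paper is explicit that this is not enough here (``we need to have good enough bounds on the constants that appear in the inequality, not just to know their existence''). Your middle paragraph proposes to use Lemma~\ref{lemaestabilidad} to propagate the enclosure, which cannot close as stated; your final paragraph does hedge by mentioning the possible need for ``sharper a priori bounds,'' but that hedge is in fact the whole content of the difficulty. What the paper actually proposes is the difference formulation \eqref{muskatinterfacenumerics}: derive a closed system for $d(\al,t)=z(\al,t)-w(\al,t)$ in which only $d$, the explicit numerical trajectory $w$, and the explicit residual $err$ appear, and then prove a \emph{quantitative} stability theorem with controllable constants, so that $\partial_\al z_1(\al,T)=\partial_\al w_1(\al,T)+\partial_\al d_1(\al,T)<0$ follows. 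Your outline should be reorganized around this formulation rather than around a validated ODE integrator for the true solution.

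A minor point: the literal constraint $\|f(\cdot,0)\|_{L^\infty}=50$ is almost certainly a typo for $\|\partial_x f(\cdot,0)\|_{L^\infty}=50$, since the supporting example has $\|f'\|_{L^\infty}=f'(0)=50$ and the surrounding discussion is about closing the gap to the constant $1$ in Theorem~\ref{weakSOLthm}, which bounds the slope, not the amplitude. Your proposal silently treats it as an amplitude bound.
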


We present in Figure \ref{50Derivative} the supporting numerical evidence, with initial condition
\begin{align*}
 z_{1}(\al,0) = \al - 0.96 \sin(\al), \quad z_{2}(\al,0) = \frac{2}{3}\sin(3\al).
\end{align*}
If we  reparametrize this curve as $(x,f(x))$, then $\|f'\|_{L^\infty}=f'(0)=50$.

\begin{figure}[ht]
\centering
\includegraphics[width=0.9\textwidth]{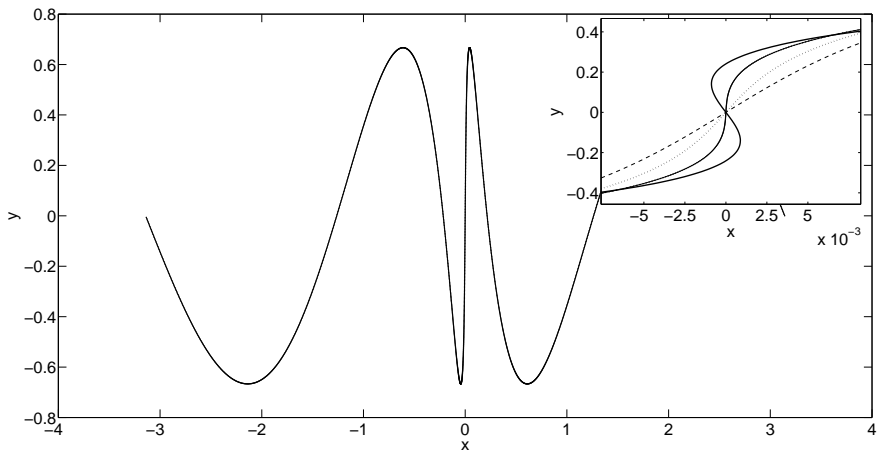}
\caption{$f(x,0)$ from Section \ref{SectionDiscussion}. Inset contains zoom (around $x = 0$) of $f(x,t)$ for different times: $t = 0$, broken line; $t = 5.86 \cdot 10^{-2}$, dotted line; $t = 1.26 \cdot 10^{-1}$, thin line; $t = 1.93 \cdot 10^{-1}$, thick line.}
\label{50Derivative}
\end{figure}

A possible strategy to proving this conjecture, outlined in \cite{Castro-Cordoba-Fefferman-Gancedo-GomezSerrano:stability-splash-singularities-water-waves}, is as follows.  One can consider an approximate solution $w(\al,t)$, satisfying (with a small error $err(\al,t)$)
\begin{align}\label{muskatinterfacenumerics}
\partial_{t}w(\al,t) = \frac{\rho^{-} - \rho^{+}}{2\pi} \int \frac{w_1(\al,t) - w_1(\beta,t)}{|w(\al,t) - w(\beta,t)|^{2}}(\partial_{\al}w(\al,t) - \partial_{\beta}w(\beta,t)) d\beta + err(\al,t)
\end{align}
Next, subtracting equation \eqref{muskatinterfacenumerics} from \eqref{muskatinterface} and defining $d(\al,t) = z(\al,t) - w(\al,t)$, one can write a system of equations for $d(\al,t)$ in such a way that only $d(\al,t)$ $w(\al,t)$ and $err(\al,t)$ appear, and $z(\al,t)$ does not. We recall that $w(\al,t)$ is explicit since it is the numerically calculated function and 
$\partial_{\al} w_1(\al,T) < 0$ for some explicit $(\al,T)$. In a similar fashion as in the local existence theorem from \cite{Castro-Cordoba-Fefferman-Gancedo-LopezFernandez:rayleigh-taylor-breakdown}, a stability theorem follows, and this gives explicit bounds on the evolution of some norms of $d(\al,t)$, in particular it controls the $L^{\infty}$ norm of $\partial_{\al} d_{1}(\al,t)$. The bounding of the constants of the stability theorem can be done either via traditional pencil-and-paper means, or using rigorous computer-assisted bounds and interval arithmetics, or a combination of both. Finally, if $\|\partial_{\al} d_1(\al,T)\|_{L^{\infty}}$ is small enough, then $\partial_{\al} z_1(\al,T) = \partial_{\al} w_1(\al,T) + \partial_{\al} d_1(\al,T) < 0$. Note that, as opposed to Lemma \ref{lemaestabilidad}, we need to have good enough bounds on the constants that appear in the inequality, not just to know their existence. This makes the problem considerably harder.

\begin{rem}
We believe that the constant 50 is not sharp and  can be improved with further numerical search and better estimates.
\end{rem}

\begin{rem}
Showing existence of a stability shift in the other direction (stable $\rightarrow$ unstable $\rightarrow$ stable) seems harder, even though we have numerics that exhibit that behaviour. One has to produce an initial condition that first turns over and then recoils back, so in contrast with Theorem \ref{theoremshifting}, the solution lives in the unstable regime during the ``middle'' time interval of its evolution.

A similar strategy as outlined above could work. After finding a numerical guess for a such solution, one can try to  show via a stability theorem and computer-assisted estimates that close to this guess there exists a true solution of \eqref{muskatinterface} which still exhibits this phenomenon.
\end{rem}

\section*{Acknowledgements}

DC and JGS were partially supported by the grant MTM2011-26696 (Spain) and ICMAT Severo Ochoa project SEV-2011-0087. AZ acknowledges partial support by NSF grants DMS-1056327 and DMS-1159133.

 \bibliographystyle{abbrv}
 \bibliography{references}

\begin{tabular}{l}
\textbf{Diego C\'ordoba} \\
  {\small Instituto de Ciencias Matem\'aticas} \\
 {\small Consejo Superior de Investigaciones Cient\'ificas} \\
 {\small C/ Nicolas Cabrera, 13-15, 28049 Madrid, Spain} \\
  {\small Email: dcg@icmat.es} \\
\\
 {\small Department of Mathematics} \\
 {\small Princeton University} \\
 {\small 804 Fine Hall, Washington Rd,} \\
  {\small Princeton, NJ 08544, USA} \\
 {\small Email: dcg@math.princeton.edu} \\
\\
\textbf{Javier G\'omez-Serrano} \\
{\small Department of Mathematics} \\
{\small Princeton University}\\
{\small 610 Fine Hall, Washington Rd,}\\
{\small Princeton, NJ 08544, USA}\\
 {\small Email: jg27@math.princeton.edu} \\
  \\
\textbf{Andrej Zlato\v{s}} \\
{\small Department of Mathematics}\\
{\small University of Wisconsin}\\
{\small 480 Lincoln Dr}\\
{\small Madison, WI 53706, USA}\\
{\small Email: andrej@math.wisc.edu}\\
    \\

\end{tabular}

\end{document}